\newcommand{\be}[0]{\begin{equation}}
\newcommand{\ee}[0]{\end{equation}}
\numberwithin{equation}{section}
\theoremstyle{plain}
\newtheorem{theorem}{Theorem}[section]
\begin{document}

\title[Parabolic equations]{A singularly perturbed system of parabolic equations}

\author[Asan Omuraliev, ~ Peiil Esengul kyzy]{Asan Omuraliev, ~ Peiil Esengul kyzy}


\begin{abstract}
The work is devoted to the construction of the asymptotic behavior of the solution of a singularly perturbed system of equations of parabolic type, in the case when the limit equation has a regular singularity as the small parameter tends to zero. The asymptotics of the solution of such problems contains boundary layer functions.
\end{abstract}

\maketitle

\section{Introduction and summary}

We consider the first boundary-value problem for a system of singularly perturbed parabolic equations
\begin{equation}\label{eq1}
L_\varepsilon u\equiv (\varepsilon+t)\partial_t u-\varepsilon^2 A(x)\partial_x^2 u-D(t)u=f(x,t), ~~ (x,t)\in\Omega 
\end{equation}
$$
u(x,0,\varepsilon)=h(x), ~~ u(0,t,\varepsilon)=u(1,t,\varepsilon)=0,
$$
where $ \Omega=(0,1)\times(0,T], ~~ \varepsilon>0$ is small parameter, $u=u(x,t,\varepsilon)= \\ col(u_1(x,t,\varepsilon),u_2(x,t,\varepsilon),...,u_n(x,t,\varepsilon)) $,  $ A(x)\in C^\infty([0,1],\mathbb C^{n^2})$, \\ $ D(t)\in C^\infty([0,T],\mathbb C^{n^2})$,  $ f(x,t)\in C^\infty(\bar{\Omega},\mathbb C^n)$.

The work is a continuation of [1], where instead of the matrix - function $A(x)$ there was a scalar function and an asymptotic of the solution was constructed containing two functions describing the boundary layers along $x=0$ and $x=1$. In this case, the asymptotics contains 2m parabolic boundary layer functions describing the boundary layers along  $x=0$ and $x=1$. 

Construction of the asymptotic solution of a singularly perturbed system of parabolic equations is devoted  works [2] - [5]. In [2], a regularized asymptotic  is constructed in the case when the matrix of coefficients for the desired function has zero multiple eigenvalue. A similar problem was studied in [3] and an asymptotic of the boundary layer type was constructed. The method of boundary functions in [4] studied the bisingular problem for systems of parabolic equations, which is characterized by the presence of nonsmoothness of the asymptotic terms and a singular dependence on a small parameter. In [5] - [6], various problems for split systems of two equations of parabolic type were studied, and asymptotics of the boundary layer type were constructed. The problems of differential equations of parabolic type with a small parameter were studied in [7]-[9]. In [10]-[13], numerical methods for solving singularly perturbed problems are studied.

\section{Statement of the problem}

We consider the first boundary-value problem (\ref{eq1}).

The problem is solved under the following assumptions:

1) For $n$ is dimensional vector-valued functions $f(x,t)$ and $h(x)$ performed inclusions
$$
f(x,t)\in C^\infty (\bar{\Omega},\mathbb C^n), ~~~ h(x)\in C^\infty ([0,1],\mathbb C^n),
$$
for $n\times n$ are matrix-valued functions $D(t)$ and $A(x)$  inclusion 
$$
D(t)\in C^\infty ([0,T],\mathbb C^{n\times n}), ~~~ A(x)\in C^\infty ([0,1],\mathbb C^{n\times n});
$$

2) The real parts of all roots $\lambda_i(x)\lambda, ~~ i=\overline{1,n},$ of the equation $det(A(x)-\lambda E)=0$ are positive and $\lambda_i(x)\neq\lambda_j(x)$ for all $x\in[0,1],$  if $i\neq j, ~~~ i,j=\overline{1,n}$;

3) The real parts of the eigenvalues $\beta_j(t), ~~ j=\overline{1,n}$ of the matrix $D(t)$ are not positive, i.e. $Re(\beta_j(t))\leq 0$ and $\beta_i(0)\neq\beta_j(t) ~~ \forall t \in [0,T], ~~ i\neq j, ~~~ i,j=\overline{1,n}$;

4) Completed the conditions of approval  the initial and boundary conditions $h(0)=h(1)=0$.

\section{Regularization of the problem}

Following [1, p.316; 5, p.18], we introduce regularizing variables

\begin{equation}\label{eq3}
\xi_{i,l}=\frac{\varphi_{i,l}(x)}{\sqrt{\varepsilon^3}};  \varphi_{i,l}(x)=(-1)^{l-1}\int_{l-1}^x \frac{ds}{\sqrt{\lambda_i(s)}}, l=1,2, i=\overline{1,n}, \end{equation}
$$
\tau=\frac{1}{\varepsilon}ln(\frac{t+\varepsilon}{\varepsilon}), ~~ \mu_j=\beta_j(0)ln(\frac{t+\varepsilon}{\varepsilon})\equiv K_j(t,\varepsilon), ~ j=\overline{1,n}, 
$$
and an extended function such that
$$
\tilde{u}(M,\varepsilon)|_{\xi=\varphi(x)/\varepsilon}\equiv u(x,t,\varepsilon), ~ M=(x,t,\xi,\tau,\mu), ~~ \xi=(\xi_1,\xi_2), \\
$$
\begin{equation}\label{eq4}
~~ \xi_l=(\xi_{1,l},\xi_{2,l},...,\xi_{n,l}), \varphi(x)=(\varphi_1(x),\varphi_2(x)), ~~  \end{equation}
$$
\varphi_l(x)=(\varphi_{1,l},\varphi_{2,l}(x),...,\varphi_{n,l}(x)), ~~ l=1,2, ~~ \mu=(\mu_1,\mu_2,...,\mu_n).
$$

Based on (\ref{eq3}), we find the derivatives from (\ref{eq4}):

$$
\partial_t u\equiv\left(\partial_t\tilde{u}+\frac{1}{\varepsilon(t+\varepsilon)}\partial_\tau\tilde{u}+\sum_{j=1}^n \frac{\beta_j(0)}{t+\varepsilon}\partial_{\mu_j}\tilde{u}\right)|_{\theta=\chi(x,t,\varepsilon)},
$$
$$
\partial_x u\equiv\left(\partial_x \tilde{u}+\sum_{l=1}^2 \sum_{i=1}^n\left[\frac{1}{\sqrt{\varepsilon^3}}\acute{\varphi_{i,l}}(x)\partial_{\xi_i,l}\tilde{u}\right]\right)|_{\xi=\varphi(x)/\sqrt{\varepsilon^3}},
$$
$$
\partial_x^2 u\equiv\left(\partial_x^2\tilde{u}+\sum_{l=1}^2 \sum_{i=1}^n \left[\frac{1}{\varepsilon^3}(\acute{\varphi}_{i,l}(x))^2\partial_{\xi_{i,l}}^2\tilde{u}+\frac{1}{\sqrt{\varepsilon^3}}L_{i,l}^\xi\tilde{u}\right]\right)|_{\xi=\varphi(x)/\sqrt{\varepsilon^3}},
$$
$$
L_{\tau,l}^\xi\tilde{u}=2\varphi^{'}_{i,l}(x)\partial_{x\xi_{i,l}}^2 \tilde{u}+\varphi^{''}_{i,l}(x)\partial_{\xi_{i,l}}\tilde{u}, ~~~ \chi(x,t,\varepsilon)=\left(\frac{\varphi(x)}{\sqrt{\varepsilon^3}},\frac{1}{\varepsilon}ln(\frac{t+\varepsilon}{\varepsilon}),K(t,\varepsilon)\right),
$$$$
\varphi=(\varphi_1, \varphi_2), ~~ \varphi_l=(\varphi_{1,l},\varphi_{2,l},...,\varphi_{n,l})
$$
$$
K(t,\varepsilon)=\left(K_1(t,\varepsilon),K_2(t,\varepsilon),...,K_n(t,\varepsilon)\right), ~~~ \theta=(\tau,\mu).
$$

According to these calculations, as well as (\ref{eq2}) and (\ref{eq4}), we pose the following extended problem:
\begin{equation}
\label{eq5}
\tilde{L}_\varepsilon \tilde{u}\equiv\varepsilon\partial_t\tilde{u}+\frac{1}{\varepsilon}T_0\tilde{u}+T_1\tilde{u}-\sqrt{\varepsilon}L^\xi\tilde{u}-\varepsilon^2L_x\tilde{u}=f(x,t), \end{equation}
$$
\tilde{u}|_{t=0}=h(x), ~~ \tilde{u}|_{x=0,\xi_{i,1}=0}=\tilde{u}|_{x=1,\xi_{i,2}=0}=0, ~~ i=\overline{1,n}, 
$$
$$T_0\tilde{u}=\partial_\tau\tilde{u}-\sum_{l=1}^2\sum_{i=1}^n \left(\varphi^{'}_{i,l}(x)\right)^2 A(x)\partial_{\xi_{i,l}}^2\tilde{u}, ~~ T_1\tilde{u}=t\partial_t\tilde{u}+\sum_{j=1}^\infty \beta_j(0)\partial_{\mu_j}\tilde{u}-D(t)\tilde{u},
$$$$
L^\xi\tilde{u}=\sum_{l=1}^2\sum_{i=1}^n A(x)L_{i,l}^\xi\tilde{u}, ~~~ L_x\tilde{u}=A(x)\partial_x^2\tilde{u}.
$$

In this case, the identity holds:
\begin{equation}\label{eq6}
\left(\tilde{L}_\varepsilon\tilde{u}(M,\varepsilon)\right)_{\theta=\chi(x,t,\varepsilon)}\equiv L_\varepsilon u(x,t,\varepsilon).
\end{equation}
The solution to the extended problem (\ref{eq5}) will be defined as a series:
\begin{equation}\label{eq7}
\tilde{u}(M,\varepsilon)=\sum_{k=0}^\infty \varepsilon^{k/2}u_k(M).
\end{equation}

Substituting (\ref{eq7}) into problem (\ref{eq5}) and equating the coefficients for the same powers of P, we obtain the following equations:
\begin{equation}\label{eq8}
T_0 u_0=0, ~~ T_0 u_1=0, ~~ T_0 u_2=f(x,t)-T_1 u_0, ~~ T_0 u_3=L_\xi u_0-T_1 u_1,
\end{equation}
$$
T_0 u_k = L_\xi u_{k-3}+L_x u_{k-6}-\partial_t u_{k-4}-T_1 u_{k-2}. 
$$

The initial and boundary conditions for them are set in the form:
\begin{equation}\label{eq9}
u_0|_{t=\tau=\mu=0}=h(x), ~~ u_k(M)|_{t=\mu=\tau=0}=0, \end{equation}
$$
u_k(M)|_{x=l-1,\xi_{i,l}=0}=0, ~~ k\geq 1, ~~ i=\overline{1,n}, ~~ l=1,2. $$

\section{Solvability of iterative problems}

Each of problems (\ref{eq8}) has innumerable solutions; therefore, we single out a class of functions in which these problems were uniquely solvable. We introduce the following function classes:
$$
U_1=\left\{V(x,t): V(x,t)=\sum_{i=1}^n v_i(x,t)\psi_i(t), ~~ v_i(x,t)\in C^\infty(\bar{\Omega})\right\},
$$$$
U_2=\left\{Y(N): Y(N)=\sum_{l=1}^2 \sum_{i=1}^n y_i^l(N_i^l)b_i(x), ~~ \left|y_i^l(N_i^l)\right|<c\exp\left(-\frac{\xi_{i,l}}{8\tau}\right)\right\},
$$$$
U_3=\left\{C(x,t): C(x,t)=\sum_{i=1}^n\left[\sum_{j=1}^n c_{ij}(x,t)\exp(\mu_j)+p_i(x)\right]\psi_i(t), ~~ c_{ij}(x,t)\in C^\infty (\bar{\Omega})\right\},
$$$$
U_4=\left\{Z(N): Z(N)=\sum_{l=1}^2 \sum_{i,j=1}^n z_{i,j}^l(N_i^l)b_i(x)\exp(\mu_j), ~~ \left|z_{i,j}^l(N_i^l)\right|<c\exp\left(-\frac{\xi_{i,l}^2}{8\tau}\right)\right\},
$$
where $N_i^l=(x,t,\xi_{i,l},\tau,\mu_i), ~~ i=1,2,...,n, ~~ l=1,2.$  From these classes of functions we construct a new class as a direct sum:
$$
U=U_1\oplus U_2\oplus U_3\oplus U_4.
$$
The function $u_k(M)\in U$ is representable in vector form:
$$
u_k(M)=\Psi(t)\left[V_k(x,t)+C^k(x,t)\exp(\mu)\right]+
$$$$
+\sum_{l=1}^2 B(x)\left[Y^{k,l}(N^l)+Z^{k,l}(N^l)\exp(\mu)\right], ~~ C^k(x,t)=C^k_1(x,t)+\Lambda(P(x)),
$$$$
C^k_1(x,t)=(c_{ij}(x,t)), ~~ V_k(x,t)=col(v_{k1},v_{k2},...,v_{kn}), 
$$$$ Y^{k,l}(N^l)=col(Y^{k,l}_1(N^l),Y^{k,l}_2(N^l),...,Y^{k,l}_n(N^l)), ~~ Z^{k,l}(N^l)=(z^{k,l}_{ij}(N^l)), 
$$$$ \Psi(t)=(\psi_1(t),\psi_2(t),...,\psi_n(t)), ~~ B(x)=(b_1(x),b_2(x),...,b_n(x)),
$$
$$
\exp(\mu)=col(\exp(\mu_1),\exp(\mu_2),...,\exp(\mu_n)).
$$
or in coordinate form:
\begin{equation}
\label{eq10}
u_k(M)=\sum_{i=1}^n v_{k,i}(x,t)\psi_i(t)+\sum_{l=1}^2 \sum_{i=1}^n y_i^{k,l}(N_i^l)b_i(x)+ \end{equation}
$$
+\sum_{i,j=1}^n \left\{c_{i,j}^k(x,t)\psi_i(t)+\sum_{l=1}^2 z_{i,j}^{k,l}(N_i^l)b_i(x)\right\}\exp(\mu_j)+\sum_{i=1}^n p_i^k(x)\psi_i(t)\exp(\mu_i). $$

The vector functions $b_i(x), ~ \psi_i(t)$ included in these classes are eigenfunctions of the matrices $A(x)$ and $D(t)$, respectively:
\begin{equation}\label{eq11}
A(x)b_i(x)=\lambda_i(x)b_i(x), ~~ D(t)\psi_i(t)=\beta_i(t)\psi_i(t), ~~ i=\overline{1,n}.
\end{equation}
Moreover, according to condition 1), they are smooth of their arguments.

Along with the eigenvectors $b_i(x)$ and $\psi_i(t)$, the eigenvectors $b_i^*(x), \psi_i^*(t), ~~ i = \overline{1,n}$ of the conjugate matrices $A^*(x), ~~ D^*(t)$ will be used:
$$
A^*(x)b_i^*(x)=\bar{\lambda}_i(x)b^*_i(x), ~~~ D^*(t)\psi_i^*(t)=\bar{\beta}_i(t)\psi_i^*(t)
$$
and they are selected biorthogonal:
$$
\left(b_i(x),b^*_j(x)\right)=\delta_{i,j}, ~~~ \left(\psi_i(t),\psi_j^*(t)\right)=\delta_{i,j}, ~~ i,j=\overline{1,n}
$$
where $\delta_{i,j}$ is Kronecker symbol.

We calculate the action of the operators $T_0, T_1, L_\xi, L_x$ on the function $u_k(M,\varepsilon) $ from (\ref{eq10}), taking into account relations (\ref{eq11}) and
$$
\varphi^{'2}_{i,l}(x)=\frac{1}{\lambda_i(x)}, ~ i=\overline{1,n}
$$
we have:
\begin{equation}\label{eq12}
T_0 u_k(M)\equiv \sum_{i=1}^n \sum_{l=1}^2 \left\{\partial_\tau y_i^{k,l}(N_i^l)-\partial_{\xi_{i,l}}^2 y_i^{k,l}(N_i^l)+  \right.\end{equation}
$$
+\sum_{j=1}^n \left.\left[\partial_\tau z_{i,j}^{k,l}(N_i^l)-\partial_{\xi_{i,l}}^2 z_{i,j}^{k,l}(N_i^l)\right]\exp(\mu_j)\right\}b_i(x);  $$

or vector form
$$
T_0 u_k(M)\equiv \sum_{l=1}^2 B(x)\left\{\partial_\tau Y^{k,l}(N^l)-\partial_{\xi_l}^2 Y^{k,l}(N^l)+\right.$$
$$\left.+\left[\partial_\tau Z^{k,l}(N^l)-\partial_{\xi_l}^2 Z^{k,l}(N^l)\right]\exp(\mu)\right\}
$$
$Y^{k,l}(N^l)$ is $n$- vector, $Z^{k,l}(N^l)$ is $n\times n$ - matrix.

Here $B(x)$ is a matrix function $(n\times n)$ whose columns are the eigenvectors $b_i(x)$ of the matrix $A(x)$.
We calculate
$$
T_1 u_k=t\partial_t u_k+\sum_{j=1}^n \beta_j(0)\partial_{\mu_j} u_k-D(t)u_k=
$$$$
t\sum_{i=1}^n \left\{\left[\partial_t v_{k,i}+\sum_{r=1}^n \alpha_{r,i}(t)v_{k,r}(x,t)\right]\psi_i(t)+\sum_{l=1}^2 \partial_t y_i^{k,l}(N^l_i)b_i(x)+\right.
$$
\begin{equation}\label{eq13}
+\sum_{j=1}^n\left[\left(\partial_t c_{ij}^k(x,t)+\sum_{r=1}^n \alpha_{ri}(t)c_{r,j}^k(x,t)+\alpha_{ji}(t)p_j^k(x)\right)\psi_i(t)+\right.
\end{equation}
$$\left.\left.+\sum_{l=1}^2 z_{ij}^{k,l}(N_i^l)b_i(x)\right]\exp(\mu_j)\right\}+
$$$$+\sum_{i,j=1}^n \beta_j(0)c_{ij}^k(x,t)\psi_i(t)\exp(\mu_j)+\sum_{i=1}^n \beta_i(0)p_i^k(x)\psi_i(t)\exp(\mu_j)+
$$$$
+\sum_{l=1}^2 \sum_{i,j=1}^n \beta_j(0)z_{i,j}^{k,l}(x,t)b_i(x)\exp(\mu_j)-
$$$$
-\sum_{i=1}^n \left\{\beta_i(t)v_{ki}(x,t)\psi_i(t)+\sum_{l=1}^2 \sum_{r=1}^n \gamma_{r,i}(x,t
)y_i^{k,l}(N_i^l)b_i(x)\right\}-
$$$$
-\sum_{i,j=1}^n \beta_j(t)c_{i,j}^k(x,t)\psi_i(t)\exp(\mu_j)-\sum_{i=1}^n \beta_i(t)p_i^k(x)\psi_i(t)\exp(\mu_i)-
$$$$
-\sum_{l=1}^2 \sum_{i,j=1}^n \sum_{r=1}^n \gamma_{r,i}(x,t)z_{r,j}^{k,l}(N_i^l)b_i(x)\exp(\mu_j)
$$
here $\alpha_{i,r}=\left(\psi^{'}_i (t),\psi^{*}_r(t)\right), ~~ \gamma_{i,r}(x,t)=\left(D(t)b_i(x),b^*_r(x)\right).$

It will be shown below that the scalar functions $y_i^{k,l}(N^l)$ and $z_{i,j}^{k,l}(N^l)$ are representable in the form:
$$
y_i^{k,l}(N^l)=d_i^{k,l}(x,t)y_i^{k,l}(\xi_l,\tau), ~~ z_{i,j}^{k,l}(N^l)=\omega_{i,j}^{k,l}(x,t)z_i^{k,l}(\xi_l,\tau).
$$
Given these representations, we calculate:
\begin{equation} \label{eq14} 
L_\xi u_k(M)=\sum_{l=1}^2 A(x)\sum_{i=1}^n \left\{\left[2\varphi^{'}_i(x)\left(b_i(x)d_i^{k,l}(x,t)\right)^{'}_x + \right.\right. \end{equation}
$$
\left.+\varphi^{''}_i(x)\left(b_i(x)d_i^{k,l}(x,t)\right)\right]\partial_{\xi_{i,l}}y_i^{k,l}(\xi_{i,l},\tau)+  $$$$
+\sum_{j=1}^n \left[2\varphi^{'}_i(x)\left(b_i(x)\omega_{i,j}^{k,l}(x,t)\right)^{'}_x + \right. \\   $$$$
\left.\left.+\varphi^{''}_i(x)\left(b_i(x)\omega_{i,j}^{k,l}(x,t)\right)\right]\partial_{\xi_{i,l}}z_{i,j}^{k,l}(\xi_{i,l},\tau)\exp(\mu_j)\right\}, $$

$$
L_x u_k(M)=A(x)\left[\partial_x^2 (V_k(x,t))+\sum_{l=1}^2 \partial_x^2 \left(B(x)Y^{l,k}(N^l)\right)+\right. 
$$$$
\left.+\partial_x^2 \left(\Psi(t)C^{k,0}(x,t)\right)\exp(\mu)+\sum_{l=1}^2 \partial_x^2 \left(B(x)Z^{l,k}(N^l)\right)\exp(\mu)\right].
$$
Satisfy the function (\ref{eq10}) of the boundary conditions from (\ref{eq5})
\begin{equation} \label{eq15}
y_i^{k,l}(N_i^l)|_{t=\tau=\mu=0}=0, ~~ z_{i,j}^{k,l}(N_i^l)|_{t=\tau=\mu=0}=0, \end{equation}
$$
c_{ii}^k(x,0)=-v_{k,i}(x,0)-p_i^k(x)-\sum_{i\neq j} c_{ij}^k(x,0), $$$$ 
y_i^{k,l}(N_i^l)|_{\xi_{i,l}=0}=d_i^{k,l}(x,t), ~~ d_i^{k,l}(x,t)b_i(x)|_{x=l-1}=-v_i(l-1,t)\psi_i(t),  $$$$
z_{i,j}^{k,l}(N_i^l)|_{\xi_{i,l}=0}=\omega_{i,j}^{k,l}(x,t), $$$$
\omega_{i,j}^{k,l}(x,t)b_i(x)|_{x=l-1}=-\left[c_{i,j}(l-1,t)+p_i(l-1)\right]\psi_i(t).  $$

In general form, the iterative equations (\ref{eq8}) are written as
\begin{equation}\label{eq16}
T_0 u_k(M)=h_k(M).
\end{equation}
\begin{theorem}
Let $h_k(M)\in U$ and conditions 2), 3) hold. Then equation (\ref{eq16}) has a solution $u_k(M)\in U$ if the equations are solvable
\begin{equation}
\label{eq17}
\partial_\tau y_i^{k,l}(N_i^l)-\partial_{\xi_{i,l}}^2 y_i^{k,l}(N_i^l)=h_i^{k,l}(N^l)\equiv \bar{h_i}^{k,1}(x,t)\bar{\bar{h_i}}^{k,1}(\xi_{i,1},\tau)   \end{equation}
$$
\partial_\tau z_{i,j}^{k,l}(N_i^l)-\partial_{\xi_{i,l}}^2 z_{i,j}^{k,l}(N_i^l)=h_{ij}^{k,2}(N^l)\equiv \bar{h}_{ij}^{k,2}(x,t)\bar{\bar{h_i}}^{k,2}(\xi_{i,2},\tau).  
$$

\end{theorem}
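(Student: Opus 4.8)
The plan is to substitute the general element (\ref{eq10}) of the class $U$ into the operator $T_0$, compare the result with the decomposition of the given right-hand side $h_k(M)\in U$ along $U=U_1\oplus U_2\oplus U_3\oplus U_4$, and in this way reduce the iterative equation (\ref{eq16}) to the finite family of scalar half-line heat equations (\ref{eq17}) for the boundary-layer amplitudes. First I would use the computation already recorded in (\ref{eq12}): since $T_0=\partial_\tau-\sum_{l=1}^2\sum_{i=1}^n(\varphi'_{i,l}(x))^2A(x)\partial^2_{\xi_{i,l}}$ involves only $\partial_\tau$ and $\partial^2_{\xi_{i,l}}$, it annihilates the $U_1$- and $U_3$-parts of $u_k$ (the terms $v_{k,i}(x,t)\psi_i(t)$, $c^k_{i,j}(x,t)\psi_i(t)\exp(\mu_j)$, $p^k_i(x)\psi_i(t)\exp(\mu_i)$, which do not depend on $\xi,\tau$), while on the $U_2\oplus U_4$-part it acts through $A(x)b_i(x)=\lambda_i(x)b_i(x)$ and $(\varphi'_{i,l}(x))^2=1/\lambda_i(x)$, collapsing $(\varphi'_{i,l})^2A(x)\partial^2_{\xi_{i,l}}\big(y_i^{k,l}b_i(x)\big)$ to $\big(\partial^2_{\xi_{i,l}}y_i^{k,l}\big)b_i(x)$ and similarly for the $z$-terms. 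Thus $T_0u_k$ lies in $U_2\oplus U_4$ and carries the plain heat operator $\partial_\tau-\partial^2_{\xi_{i,l}}$ on each scalar amplitude, which is precisely (\ref{eq12}).

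Next I would expand $h_k\in U$ in the form (\ref{eq10}), $h_k=h_k^{(1)}+h_k^{(2)}+h_k^{(3)}+h_k^{(4)}$ with $h_k^{(m)}\in U_m$, and equate it with $T_0u_k$. Because the representations are written in the biorthogonal systems $\{b_i(x)\}$, $\{\psi_i(t)\}$ and in the linearly independent functions $1,\exp(\mu_1),\dots,\exp(\mu_n)$, equation (\ref{eq16}) splits componentwise. The $U_1\oplus U_3$ projection reads $h_k^{(1)}+h_k^{(3)}=0$: this is the solvability condition for (\ref{eq16}), which in the recursion is secured at step $k-2$ by the free choice of the regular amplitudes $V_{k-2},C^{k-2}_1,P^{k-2}$ in $T_1u_{k-2}$ (see (\ref{eq13})), the non-resonance assumptions in 2) and 3) guaranteeing that the corresponding first-order equations in $t$ for those amplitudes are non-degenerate; so I may treat $h_k$ as already lying in $U_2\oplus U_4$. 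The $U_2$ projection (the coefficient of $b_i(x)$ with no $\exp(\mu)$) then gives $\partial_\tau y_i^{k,l}-\partial^2_{\xi_{i,l}}y_i^{k,l}=h_i^{k,l}(N^l)$ and the $U_4$ projection (the coefficient of $b_i(x)\exp(\mu_j)$) gives $\partial_\tau z_{i,j}^{k,l}-\partial^2_{\xi_{i,l}}z_{i,j}^{k,l}=h_{ij}^{k,2}(N^l)$, which are exactly the equations (\ref{eq17}); the factorized form of their right-hand sides is inherited from the way $h_k$ is assembled in (\ref{eq8}) together with the separated representations $y_i^{k,l}=d_i^{k,l}(x,t)\,y_i^{k,l}(\xi_l,\tau)$ and $z_{i,j}^{k,l}=\omega_{i,j}^{k,l}(x,t)\,z_i^{k,l}(\xi_l,\tau)$ announced before (\ref{eq14}). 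Granting that (\ref{eq17}) is solvable, I would pick such solutions, then read off the boundary data $y_i^{k,l}|_{\xi_{i,l}=0}=d_i^{k,l}(x,t)$, $z_{i,j}^{k,l}|_{\xi_{i,l}=0}=\omega_{i,j}^{k,l}(x,t)$ and the initial data $y_i^{k,l}|_{t=\tau=\mu=0}=z_{i,j}^{k,l}|_{t=\tau=\mu=0}=0$ from (\ref{eq15}), use the remaining relations of (\ref{eq15}) to fix $d_i^{k,l},\omega_{i,j}^{k,l}$ and the corner values of $V_k,C^k$, and leave the still-free smooth amplitudes $V_k,C^k,P^k$ to be determined at step $k+2$; this produces $u_k(M)\in U$ with $T_0u_k=h_k$.

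The hard part will be the last verification hidden in ``solution in $U$'': that the half-line parabolic problems for the scalar amplitudes $y_i^{k,l},z_{i,j}^{k,l}$ --- posed in the variables $(\xi_{i,l},\tau)$ with the corner data above --- actually admit solutions obeying the Gaussian bounds $|y_i^{k,l}|<c\exp(-\xi_{i,l}^2/(8\tau))$ and $|z_{i,j}^{k,l}|<c\exp(-\xi_{i,l}^2/(8\tau))$ built into the definitions of $U_2$ and $U_4$. This is where condition 2) is essential: $\mathrm{Re}\,\lambda_i(x)>0$ keeps $1/\lambda_i(x)$ non-degenerate, so that after the reduction (\ref{eq17}) is a genuine (complex-coefficient) heat equation on a half-line and the Poisson-kernel representation of its solution yields the stated exponential decay, exactly as in the scalar treatment of [1] and [5]; condition 3), $\mathrm{Re}\,\beta_j(t)\le0$, in turn keeps $\exp(\mu_j)=\big((t+\varepsilon)/\varepsilon\big)^{\beta_j(0)}$ bounded, so that the $U_3$- and $U_4$-terms remain in the class. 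Checking the compatibility of the initial and boundary data at the corner $(\xi_{i,l},\tau)=(0,0)$ and propagating the weighted estimates through the recursion is the genuinely technical core of the argument.
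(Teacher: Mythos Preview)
Your approach is essentially the same as the paper's: substitute the ansatz (\ref{eq10}) into $T_0$, use (\ref{eq12}) together with $A(x)b_i=\lambda_i b_i$ and $(\varphi'_{i,l})^2=1/\lambda_i$ to see that $T_0u_k\in U_2\oplus U_4$ with the plain heat operator acting on each scalar amplitude, match coefficients of $b_i(x)$ and $b_i(x)\exp(\mu_j)$ to obtain (\ref{eq17}), and then invoke the half-line heat kernel to produce solutions with the required Gaussian decay.

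Two minor remarks. First, the paper's proof of this theorem is more explicit at the final step than yours: rather than merely citing the ``Poisson-kernel representation,'' it writes down the closed solution formula (\ref{eq18}), namely $y_i^{k,l}=d_i^{k,l}(x,t)\,\mathrm{erfc}\!\big(\xi_{i,l}/(2\sqrt{\tau})\big)+\bar h_i^{k,1}(x,t)I_1(\xi_{i,l},\tau)$ (and similarly for $z_{i,j}^{k,l}$), with $I_r$ the odd-extension Duhamel integral, and then states the bound $|I_r|\le c\exp(-\xi_{i,l}^2/(8\tau))$; you would do well to be equally concrete there, since this is precisely what certifies membership in $U_2,U_4$. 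Second, a fair amount of what you include --- the solvability condition $h_k^{(1)}+h_k^{(3)}=0$ being arranged at step $k-2$, the determination of $d_i^{k,l},\omega_{i,j}^{k,l},V_k,C^k,P^k$ via (\ref{eq15}) and (\ref{eq19}), and the role of condition 3) in bounding $\exp(\mu_j)$ --- is material the paper places in Theorem~2, not Theorem~1. The paper's Theorem~1 simply \emph{assumes} $h_k\in U$ already has the $U_2\oplus U_4$ form $\sum_{l,i}[h_i^{k,1}+\sum_j h_{ij}^{k,2}\exp(\mu_j)]b_i(x)$ and proceeds from there; your discussion of how this is achieved in the recursion is correct but belongs to the next theorem.
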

\begin{proof}
Let $h_k(M)=\sum_{l=1}^2 \sum_{i=1}^n \left[h^{k,1}_i(N^l)+\sum_{j=1}^n h_{ij}^{k,2}(N^l)\exp(\mu_j)\right]b_i(x)\in U$. Pose (\ref{eq10}) into equation (\ref{eq16}), then, on the basis of  calculations (\ref{eq14}), with respect to $y_i^{k,l}(N_i^l), ~~ z_{ij}^{k,l}(N_i^l)$, we obtain equations (\ref{eq17}).

These equations, under appropriate boundary value conditions:
$$
y_i^{k,l}(N_i^l)|_{t=\tau=\mu=0}=0, ~~ y_i^{k,l}(N_i^l)|_{\xi_{i,l}=0}=d_i^{k,l}(x,t)
$$
$$
z_{i,j}^{k,l}(N_i^l)|_{t=\tau=\mu=0}=0, ~~ y_{i,j}^{k,l}(N_i^l)|_{\xi_{i,l}=0}=\omega_{i,j}^{k,l}(x,t)
$$
Solutions are represented in the form
\begin{equation}\label{eq18}
y_i^{k,l}(N_i^l)=d_i^{k,l}(x,t)erfc\left(\frac{\xi_{i,l}}{2\sqrt{\tau}}\right)+\bar{h_i}^{k,1}(x,t)I_1(\xi_{i,l},\tau),  \end{equation}
$$ 
z_{i,j}^{k,l}(N_i^l)=\omega_{i,j}^{k,l}(x,t)erfc\left(\frac{\xi_{i,l}}{2\sqrt{\tau}}\right)+\bar{h_{ij}}^{k,2}(x,t)I_2(\xi_{i,l},\tau)  $$

$$
I_r(\xi_{i,l},\tau)=\frac{1}{2\sqrt{\pi}}\int_0^\tau \int_0^\infty \frac{\bar{\bar{h}}_i^{k,r}(\eta,s)}{\sqrt{\tau-s}}\left[\exp\left(-\frac{(\xi_{i,l}-\eta)^2}{4(\tau-s)}\right)-\right.
$$$$\left.-\exp\left(-\frac{(\xi_{i,l}+\eta)^2}{4(\tau-s)}\right)\right]d\eta ds, ~~ r=1,2,
$$
where $\bar{h}_i^{k,r}(x,t), ~ \bar{h}_i^{k,r}(\eta,s)$  are known functions.

Evaluation of the integral
$$
\left|I_r(\xi_l)\right|\leq c\exp(-\frac{\xi_i,l^2}{8\tau}).
$$
\end{proof}
\begin{theorem}
Let conditions 1) -4) be satisfied, then equation (\ref{eq8}) under additional conditions
\begin{enumerate}
\item
 $u_k|_{t=\mu=\tau=0}=0, ~~ u_k|_{x=l-1,\xi_{i,l}=0}=0, ~~ l=1,2;$
\item
 $-T_1 u_k-\partial_t u_{k-2}+L_x u_{k-4} \in U_2\oplus U_4;$
\item
 $L_\xi u_k=0$,
\end{enumerate}

has the unique solution in $U$.
\end{theorem}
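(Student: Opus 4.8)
The plan is to proceed by induction on $k$, reducing (\ref{eq8}) at each step to three decoupled elementary problems: a system of linear ODEs in $t$ with a regular singular point at the origin for the ``regular'' (non-boundary-layer) part of $u_k$, a system of first-order linear ODEs in $x$ for the amplitudes of the parabolic layers, and a family of half-line heat problems already treated in the preceding theorem. The point of departure is the structural fact read off from (\ref{eq12}): $T_0$ annihilates the regular part $\Psi(t)\big[V_k(x,t)+C^k(x,t)\exp(\mu)\big]$ of any element of $U$ and carries its boundary-layer part into $U_2\oplus U_4$. Hence (\ref{eq16}), i.e.\ (\ref{eq8}), is solvable in $U$ exactly when the $U_1\oplus U_3$-component of its right-hand side vanishes --- which, for the instance at level $k$, holds by the induction hypothesis, being precisely hypothesis~2) imposed at the previous step together with $L_\xi(\cdot)\in U_2\oplus U_4$ --- and, when solvable, it pins down \emph{only} the layer functions $y_i^{k,l},z_{i,j}^{k,l}$ of $u_k$, and those only up to their traces $d_i^{k,l}(x,t),\omega_{i,j}^{k,l}(x,t)$ on $\xi_{i,l}=0$. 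Thus after imposing (\ref{eq8}) the regular part and the trace amplitudes remain at our disposal, and hypotheses~2) and~3) are exactly what fixes them.

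\emph{Regular part.} Projecting hypothesis~2), $-T_1u_k-\partial_tu_{k-2}+L_xu_{k-4}\in U_2\oplus U_4$, onto the regular directions $\psi_i(t)$ and $\psi_i(t)\exp(\mu_j)$ and using (\ref{eq13}), I would obtain a closed linear system for the coefficients in (\ref{eq10}): for $v_{k,i}$ of the form
\[
t\Big(\partial_t v_{k,i}+\sum_{r=1}^n\alpha_{r,i}(t)v_{k,r}\Big)-\beta_i(t)v_{k,i}=g_{k,i}(x,t),
\]
with companion equations of the same type for the $c_{i,j}^k(x,t)$ and, in the resonant entries $i=j$ where $\beta_i(0)$ and $\beta_i(t)$ collide, for $p_i^k(x)$; here $g_{k,i}$ is already known from $u_{k-2},u_{k-4}$ and the lower-order layer data, and $x$ is only a parameter. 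Each such system has a regular singular point at $t=0$, with indicial data governed by $\beta_i(0)$ (and by the differences $\beta_i(0)-\beta_j(0)$ for the $c_{i,j}^k$-equations). I would determine $v_{k,i}$ and $c_{i,j}^k$ from these systems and fix the remaining free constants $c_{i,j}^k(x,0)$, $p_i^k(x)$ so that $u_k$ meets the initial data (\ref{eq9}) at $t=\tau=\mu=0$, which is consistent precisely because (\ref{eq15}) records the required relation among these traces. \emph{This is the step I expect to be the main obstacle:} one must show that $t=0$ is a non-resonant (removable) singular point for each system, so that a unique solution smooth on all of $[0,T]$ exists, and this is where hypotheses~1) and~3) --- smoothness of $D$, the separation $\beta_i(0)\neq\beta_j(t)$, and the sign condition $\mathrm{Re}\,\beta_j(t)\le 0$ (which also keeps $\exp(\mu_j)$ bounded after the substitution $\mu=K(t,\varepsilon)$) --- must be brought to bear.

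\emph{Layer amplitudes.} With the regular part in hand, hypothesis~3), $L_\xi u_k=0$, evaluated on the factorized form (\ref{eq14}) reduces, for each $i$ and $l$, to the first-order linear ODE in $x$
\[
2\varphi^{'}_{i,l}(x)\big(b_i(x)d_i^{k,l}(x,t)\big)^{'}_x+\varphi^{''}_{i,l}(x)\,b_i(x)d_i^{k,l}(x,t)=0
\]
together with its analogue for $\omega_{i,j}^{k,l}$. Since $\varphi^{'2}_{i,l}(x)=1/\lambda_i(x)\neq 0$ on $[0,1]$ by hypothesis~2), these equations are nonsingular and determine $d_i^{k,l}(x,t)$ and $\omega_{i,j}^{k,l}(x,t)$ uniquely and smoothly once their values at $x=l-1$ are fixed; and those values are fixed by the matching relations in (\ref{eq15}), which tie $d_i^{k,l}$ and $\omega_{i,j}^{k,l}$ to the already-computed boundary traces $v_{k,i}(l-1,t)$ and $c_{i,j}^k(l-1,t)+p_i^k(l-1)$.

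\emph{Boundary layer and assembly.} Knowing the amplitudes and the $U_2\oplus U_4$-part of the right-hand side of (\ref{eq8}), the explicit formulas (\ref{eq17})--(\ref{eq18}) of the preceding theorem produce $y_i^{k,l}$ and $z_{i,j}^{k,l}$, and the estimate $|I_r(\xi_{i,l},\tau)|\le c\exp(-\xi_{i,l}^2/(8\tau))$ obtained there places them in $U_2$ and $U_4$. Collecting the three steps yields $u_k\in U$ satisfying (\ref{eq8}); the initial condition at $t=\tau=\mu=0$ and the boundary conditions at $x=l-1,\ \xi_{i,l}=0$ in hypothesis~1) hold by construction, using $h(0)=h(1)=0$ from hypothesis~4), and smoothness in $x$ (needed so that the later $L_x$-terms are meaningful) follows from hypothesis~1) and the explicit formulas. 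Uniqueness is immediate, since each of the three sub-problems --- the regularly-singular ODEs in $t$, the transport ODEs in $x$, and the half-line heat problems --- has a unique solution in the class carved out by $U$; this closes the induction.
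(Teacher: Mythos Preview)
Your proposal is correct and follows essentially the same route as the paper: condition~2) is projected onto the $\psi_i(t)$ and $\psi_i(t)\exp(\mu_j)$ directions to produce the regular-singular ODEs in $t$ for $V_k$ and $C^k$ (the paper's (\ref{*})--(\ref{**}), solved by appeal to Wasow), condition~3) yields the first-order transport ODEs in $x$ for $d_i^{k,l},\omega_{i,j}^{k,l}$ (the paper's (\ref{eq19})), and the layer functions come from Theorem~1. The only bookkeeping nuance is that in the paper's scheme the constants $p_i^k(x)$ are fixed not by the initial data at level $k$ but by the solvability requirement arising two steps later (so $p_i^{k-2}$ is pinned down while handling level $k$), whereas $c_{ii}^k(x,0)$ is what absorbs the initial condition via (\ref{eq15}); this does not affect the argument's validity.
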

\begin{proof}
Satisfying the function $u_k(M)\in U$ with the boundary conditions from (\ref{eq2}) we obtain (\ref{eq15}). Based on calculations (\ref{eq12}), (\ref{eq13}), (\ref{eq14}) we have
$$
-T_1 u_k-\partial_t u_{k-2}+L_x u_{k-4}=-t\sum_{i=1}^n \left\{\partial_t V_{k,i}(x,t)\psi_i(t)+\right.
$$$$
+\sum_{r=1}^n \alpha_{ri}(t)V_{k,r}(x,t)\psi_i(t)+\sum_{r=1}^n \alpha_{ri}(t)p^k_r(x)\psi_i(t)\exp(\mu)+$$$$
+\sum_{j=1}^n \left[\partial_t c_{ij}^k+\sum_{r=1}^n \alpha_{r,i}(t)c_{rj}^k(x,t)\right]\psi_i(t)\exp(\mu_j)+
$$$$
\left.+\sum_{l=1}^2 \left[\partial_t y_i^{k,l}(N^l)+\sum_{j=1}^n \partial_t z_{ij}^k(N^l)\exp(\mu_j)\right]b_i(x)\right\}-
$$$$
-\sum_{i,j=1}^n \psi_i(t)c_{ij}^k(x,t)\left(\beta_j(0)-\beta_j(t)\right)\exp(\mu_j)-
$$$$
-\sum_{i=1}^n \psi_i(t)p_i^k(x)\left(\beta_j(0)-\beta_j(t)\right)\exp(\mu_j)+
$$$$
+\sum_{l=1}^2 \sum_{i=1}^n \left\{\sum_{r=1}^n \gamma_{ir}(x,t)b_r(x)y_i^{k,l}(N^l)+\sum_{j=1}^n \sum_{r=1}^n \gamma_{i,r}(x,t)b_r(x)z_{ij}^{k,l}(N^l)\exp(\mu_j)\right\}-
$$$$
-\sum_{i=1}^n \sum_{r=1}^n \alpha_{r,i}(t)p_r^{k-2}(x)\exp(\mu_r)\psi_i(t)-
$$$$
-\sum_{i=1}^n \left\{\partial_t v_{k-2}+\sum_{r=1}^n \alpha_{r,i}(t)v_r+\sum_{j=1}^n \left[\partial_t c_{ij}^{k-2}+\sum_{r=1}^n \alpha_{r,i}(t)c_{r,j}^{k-2}(x,t)\right]\exp(\mu_j)\right\}\psi_i(t)-
$$$$
-\sum_{l=1}^2 \sum_{i=1}^n \left\{\partial_t y_i^{k,l}(N^l)+\sum_{j=1}^n \partial_t z_{ij}^{k,l}(N^l)\exp(\mu_j)\right\}b_i(x)+
$$$$
+A(x)\sum_{i=1}^n \left\{\partial_x^2 V_{k-4,i}+\sum_{j=1}^n \partial_x^2 \left[c_{ij}^{k-4}(x,t)+p_i^{k-4}(x)\right]\exp(\mu_j)\right\}\psi_i(t)+
$$$$
+A(x)\sum_{l=1}^2 \sum_{i=1}^n \partial_x^2 \left(\left[y_i^{k,l}(N^l)+\sum_{j=1}^n z_{ij}^{k,l}(N^l)\exp(\mu_j)\right]b_i(x)\right),
$$

$\alpha_{ir}=\left(\psi_i^{'}(t),\psi_r(t)\right), ~~~ \gamma_{ir}(x,t)=\left(D(t)b_i(x),b_r^*(x)\right)$.

Providing the condition of Theorem 1, we set
\begin{equation}\label{*}
t\left[\partial_t V_k+A^T(t)V_k\right]=-\partial_t V_{k-2}-L_x V_{k-4}(x,t), 
\end{equation}
$$
t\left[\partial_t C^k+A^T (t)\left(C^k(x,t)+\Lambda(P^k(x))\right)\right]+\left[C^k(x,t)+\Lambda(P^k(x))\right]\Lambda(\beta(0))-
$$
\begin{equation}\label{**}
-\Lambda(\beta(t))\left[C^k(x,t)+\Lambda(P^k(x))\right]=    
\end{equation}
$$
=-\partial_t C^{k-2}-A^T(t)\left[C^{k-2}+\Lambda(P^{k-2}(x))\right]+L_x \left[C^{k-4}+\Lambda(P^{k-4}(x))\right],
$$
Equation (\ref{*}) is solved without an initial condition and uniquely determines the function $V_k(x,t)$ [Vazov]. Providing the solvability of equation (\ref{**}) we set
$$
\overline{\overline{C^k(x,t)\Lambda(\beta(0))-\Lambda(\beta(t))C^k(x,t)}}|_{t=0}=
$$$$=-\overline{\overline{\partial_t C^{k-2}-A^T(t)C^{k-2}(x,t)-A^T(t)\Lambda(P^{k-2})+L_x(C^{k-4})}}|_{t=0}
$$$$
\overline{A^T(t)\Lambda(P^{k-2}(x))}=\overline{\left[-\partial_t C^{k-2}-A^T(t)C^{k-2}-L_x\left(C^{k-4}(x,t)+\Lambda(P^{k-4}(x))\right)\right]}
$$
or in coordinate form
$$
c_{ij}^k(x,t)|_{t=0}=-\frac{1}{\beta_j(0)-\beta_i(t)}\left[\partial_t c_{ij}^{k-2}+\sum_{r\neq j}\alpha_{ri}(t)c_{rj}^{k-2}(x,t)-q_{ij}(x,t)\right]_{t=0},
$$$$
p_i^{k-2}(x)=-\frac{1}{\alpha_{ii}(t)}\left[\partial_t c_{ii}^{k-2}+\alpha_{ii}(t)c_{ii}^{k-2}-q_{ii}(x,t)\right]_{t=0},
$$
where $q_{ij}(x,t)$ is known functions included in $A(x)\partial_x^2\left[C^{k-4}(x,t)+\Lambda(P^{k-4}(x))\right]$.

On the basis of (\ref{eq14}), condition 3), Theorem 2 is ensured if arbitrary functions $d_i^{l,k}(x,t)b_i(x), ~~ \omega_{ij}^{k,l}(x,t)b_i(x)$ are solutions to the problems:
$$
2\varphi_{i,l}^{'}(x)\left(d_i^{l,k}(x,t)b_i(x)\right)_x^{'}+\varphi_{i,l}^{''}(x)\left(d_i^{l,k}(x,t)b_i(x)\right)=0
$$
$$
d_i^{l,k}(x,t)b_i(x)|_{x=l-1}=-v_k(l-1,t)\psi_i(t), ~~ i=\overline{1,n}
$$
\begin{equation}\label{eq19}
2\varphi_{i,l}^{'}(x)\left(\omega_{i,j}^{l,k}(x,t)b_i(x)\right)_x^{'}+\varphi_{i,l}^{''}(x)\left(\omega_{i,j}^{l,k}(x,t)b_i(x)\right)=0
\end{equation}
$$
\omega_{i,j}^{l,k}(x,t)b_i(x)|_{x=l-1}=-\left[c_{ij}^k(l-1,t)+p_i(l-1)\right]\psi_j(t).
$$
Thus, arbitrary functions $d_i^{l,k}(x,t), ~~ \omega_{ij}^{k,l}(x,t), ~~ v_{ki}(x,t), ~~ c_{ij}^{k,l}(x,t)$ included in (\ref{eq10}) are uniquely determined.

\end{proof}

\section{Solving iterative problems}
Iterative equation (\ref{eq8}) for $k=0,1$ is homogeneous; therefore, by Theorem 1, it has a solution $u_k(M)\in U$ if the functions $y_i^{l,k}(N_i^l), ~ z_{i,j}^{l,k}(N_i^l)$ are solutions of the equations.
$$
\partial_\tau y_i^{k,l}(N_i^l)=\partial_{\xi_{i,l}^2} y_i^{k,l}(N_i^l),
$$$$
\partial_\tau z_{i,j}^{k,l}(N_i^l)=\partial_{\xi_{i,l}^2} z_{i,j}^{k,l}(N_i^l).
$$
for boundary value conditions
$$
y_i^{k,l}(N_i^l)|_{t=\tau=0}=0, ~~ y_i^{k,l}(N_i^l)|_{\xi_{i,l}=0}=d_i^{k,l}(x,t),
$$
\begin{equation}\label{eq20}
z_{i,j}^{k,l}(N_i^l)|_{t=\tau=0}=0, ~~ z_{i,j}^{k,l}(N_i^l)|_{\xi_{i,l}=0}=\omega_{i,j}^{k,l}(x,t),
\end{equation}
From this problem we find
$$
y_i^{0,l}(N_i^l)=d_i^{0,l}(x,t)erfc\left(\frac{\xi_{i,l}}{2\sqrt{\tau}}\right),
$$$$
z_{i,j}^{0,l}(N_i^l)=\omega_{i,j}^{0,l}(x,t)erfc\left(\frac{\xi_{i,l}}{2\sqrt{\tau}}\right).
$$
The functions $d_i^{0,l}(x,t), ~ \omega_{i,j}^{l,0}(x,t)$ are determined from problems (\ref{eq19}), which ensure that condition $L_\xi u_0=0$. is satisfied. Using calculations (\ref{eq13}), the free term of iterative equation (\ref{eq8}) at $k=2$ is written as
$$
F_2(M)=-T_1 u_0(M)+f(x,t)$$
by Theorem 1, an equation with such a free term is solvable in $U$ if 
$$
t\sum_{i=1}^n \left\{\left[\partial_t v_{0,i}(x,t)+\sum_{r=1}^n \alpha_{ri}(x)v_{0,r}(x,t)\right]-\beta_i(t)v_{0,i}(x,t)\right\}=f(x,t)
$$
$$
t\sum_{i,j=1}^n \left\{\left[\partial_t c_{ij}^0(x,t)+\sum_{r=1}^n \alpha_{ri}(t)c_{rj}^0(x,t)\right]+\alpha_{ji}(t)p_j^0(x)\right\}+
$$
\begin{equation}\label{eq21}
+\sum_{i,j=1}^n \left[\beta_j(0)-\beta_i(t)\right]c_{ij}^0(x,t)+\sum_{i=1}^n \left[\beta_j(0)-\beta_i(t)\right]p_i^0(x)=0.
\end{equation}
From (\ref{eq21}) we uniquely determine $c_{i,j}^0(x,t)=0, ~ \forall i\neq j$ and the function $c_{i,i}^0(x,t)$ is determined from the equation
$$
t\left[\partial_t c_{ii}^0(x,t)+\alpha_{ii}(t)c_{ii}^0(x,t)\right]+\left(\beta_i(0)-\beta_i(t)\right)c_{ii}^0(x,t)+
$$$$
+\left[\beta_i(0)-\beta_i(t)\right]p_i^0(x)=0
$$
under the initial condition 
$$
c_{ii}^0(x,0)=-v_{0,i}(x,0)-p_i^0(x).
$$
The first equation from (\ref{eq21}), by virtue of condition 2), has a solution satisfying the condition [Vazov]
$$
\left\|v^0(x,0)\right\|< \infty.
$$
We calculate the free term of equation (\ref{eq8}) at $k=3$
$$
F_3(M)=-T_1 u_1,
$$
which has the same view as $T_1 u_0$. Providing the solvability of equation $T_0 u_3=-T_1 u_1$ in $U$, with respect to $c_{ij}^1(x,t), ~ v_{1i}(x,t)$ we obtain equations (\ref{eq21}).

In the next step $(k=4)$, the free term has the view
$$
F_4(M)=-T_1 u_2-\partial_t u_0+L_\xi u_1.
$$
The functions $d_i^{1,l}(x,t), ~ \omega_{i,j}^{1,l}(x,t)$ entering the $u_1(M)$ provide the condition $L_\xi u_1=0$. Providing the solvability of the iterative equation at $k=4$, we set
$$
t\left[\partial_t v_{2i}(x,t)+\sum_{k=1}^n \alpha_{ki}(x)v_{2k}(x,t)\right]-\beta_i(t)v_{2i}(x,t)=-\partial_t v_{0i}(x,t).
$$
For $c_{ij}^2(x,t)$, we obtain the same equation of the form (\ref{eq21}), but with the right-hand side $\partial_t c_{ij}^0(x,t)+\sum_{k=1}^n \alpha_{ki}(x)\left(c_{kj}^0(x,t)+\alpha_{ji}(x)p_j^0(x)\right)$. 

Taking off the degeneracy of this equation as $t=0$, we set $p_i^0(x)=-\frac{1}{\alpha_{ii}(t)}\left[\partial_t c_{ii}^0+\alpha_{ii}(x)c_{ii}^0\right]_{t=0}$.

Further repeating the described process, using Theorems 1 and 2, sequentially determining $u_k(M), ~~ k=0,1,...,n$, we construct a partial sum
\begin{equation}
\label{eq22}
u_{\varepsilon n}(M)=\sum_{k=0}^n \varepsilon^{k/2}u_k(M).
\end{equation}

\section{Estimate for the remainder}

For the remainder term 
$$
R_{\varepsilon n}(M)=u(M,\varepsilon)-u_{\varepsilon n }(M)=u(M,\varepsilon)-\sum_{k=0}^{n+1} \varepsilon^{k/2}u_k(M)+\varepsilon^{\frac{n+1}{2}}u_{n+1}
$$
 we get the problem
$$
\tilde{L}_\varepsilon R_{\varepsilon n}(M)=\varepsilon^{\frac{n+1}{2}}g_{\varepsilon n}(M),
$$
$$
R_{\varepsilon n}(M)|_{t=0}=R_{\varepsilon n}|_{x=l-1}=0, ~~ l=1,2.
$$
By narrowing in this problem by means of regularizing functions. Following [Hartman], passing to Euclidean norms, we obtain a problem that is limited by the maximum principle
\begin{equation}
\label{eq23}
\left\|R_{\varepsilon n}(x,t,\varepsilon)\right\|<c \varepsilon^{\frac{n+1}{2}}.
\end{equation}
\begin{theorem}
Let condition 1) -4) be satisfied. Then the restriction of the constructed solution (\ref{eq22}) is an asymptotic solution to the problem (\ref{eq2}), i.e. $\forall n=0,1,...$ the estimate (\ref{eq23}) holds at $\varepsilon \rightarrow 0$.

\end{theorem}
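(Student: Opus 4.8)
The plan is to follow the classical scheme of singular perturbation theory: build the remainder of a suitable partial sum, show that both its residual and its initial/boundary data are small, and then close the estimate by the maximum principle for the scalar model operator $(\varepsilon+t)\partial_t-\varepsilon^2\partial_x^2$ after passing to Euclidean norms.

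First I would fix $n$ and work with the partial sum $u_{\varepsilon N}=\sum_{k=0}^{N}\varepsilon^{k/2}u_k$ for an $N\ge n$ to be chosen. Since the $u_k$ were produced by Theorems 1 and 2 so as to satisfy the iterative equations (\ref{eq8}) with the data (\ref{eq9}), substituting $u_{\varepsilon N}$ into the extended equation (\ref{eq5}) cancels all powers $\varepsilon^{p/2}$ with $p\le N-2$, leaving $\tilde L_\varepsilon(\tilde u-u_{\varepsilon N})=\varepsilon^{(N-1)/2}g_{\varepsilon N}(M)$, where $g_{\varepsilon N}$ is a fixed finite combination of the $u_k$ and of $T_1u_k,\ \partial_tu_k,\ L^\xi u_k,\ L_xu_k$. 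The point to verify is that $g_{\varepsilon N}$ is bounded uniformly in $\varepsilon$ on the restriction manifold $\theta=\chi(x,t,\varepsilon)$: every element of $U=U_1\oplus U_2\oplus U_3\oplus U_4$, and every such derivative of it, is again a finite sum of terms of the same four types, and there $\xi_{i,l}=\varphi_{i,l}(x)/\sqrt{\varepsilon^3}\ge0$, $\tau=\varepsilon^{-1}\ln((t+\varepsilon)/\varepsilon)\ge0$ and $\mathrm{Re}\,\mu_j=\mathrm{Re}\,\beta_j(0)\ln((t+\varepsilon)/\varepsilon)\le0$ by condition 3), so the factors $\exp(-\xi_{i,l}^2/(8\tau))$ and $\exp(\mu_j)$ are $\le1$ and the smooth prefactors are bounded on $\bar\Omega$. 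Restricting through the identity (\ref{eq6}), and noting that at $t=0$ one has $\tau=\mu=0$ so the boundary-layer terms vanish ($\mathrm{erfc}(+\infty)=0$), that at $x=l-1$ one has $\xi_{i,l}=0$ so the conditions (\ref{eq9}), (\ref{eq15}) make $u_{\varepsilon N}$ vanish there, and using condition 4), we obtain for $R:=u-u_{\varepsilon N}$ the problem
\[
L_\varepsilon R=\varepsilon^{(N-1)/2}g_{\varepsilon N}\ \ \text{in }\Omega,\qquad R|_{t=0}=0,\quad R|_{x=0}=R|_{x=1}=0,\qquad \|g_{\varepsilon N}\|_{L^\infty(\bar\Omega)}\le c_N .
\]

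The heart of the proof is the a priori estimate for $R$. Because $A(x)$ has simple eigenvalues with positive real part (conditions 1), 2)), the eigenvector matrix $B(x)$ of (\ref{eq11}) is smooth and invertible with $B^{-1}AB=\Lambda=\mathrm{diag}(\lambda_i)$; the change $R=B(x)w$ transforms the equation for $R$ into one whose principal part is $-\varepsilon^2\Lambda(x)\partial_x^2w$, with a first-order term $\varepsilon^2Q_1(x)\partial_xw$ and zero-order terms $\varepsilon^2Q_0(x)w-(B^{-1}DB)(t)w$, $Q_0,Q_1$ bounded. Taking a smooth positive Hermitian symmetrizer $H(x)$ with $\mathrm{Re}(H(x)\Lambda(x))>0$ (Lyapunov, e.g. $H=\int_0^\infty e^{-\Lambda^*s}e^{-\Lambda s}\,ds$), setting $P=(H(x)w,\bar w)$, multiplying the equation for $w$ by $\bar w$, taking real parts and absorbing the cross terms by Young's inequality against the favourable quantity $\varepsilon^2\|\partial_xw\|^2$ coming from $\varepsilon^2\partial_x^2P$ — conditions 2), 3) being exactly what fix the sign of the principal and, up to a bounded constant, of the zero-order contributions — one arrives at a scalar parabolic inequality
\[
(\varepsilon+t)\partial_tP-\varepsilon^2\kappa\,\partial_x^2P\le C_1P+C_2\,\varepsilon^{N-1}\ \ \text{in }\Omega,\qquad P|_{t=0}=0,\quad P|_{x=0}=P|_{x=1}=0,
\]
with fixed $\kappa>0$, $C_1,C_2\ge0$. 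The substitution $P=((\varepsilon+t)/\varepsilon)^{C_1}\hat P$ removes the zero-order term, so the weak maximum principle for $(\varepsilon+t)\partial_t-\varepsilon^2\kappa\,\partial_x^2$ applies; comparing $\hat P$ with the barrier $K\varepsilon^{N-1}\ln((\varepsilon+t)/\varepsilon)$ gives $\hat P\le K\varepsilon^{N-1}\ln((\varepsilon+T)/\varepsilon)$, whence $P\le c\,\varepsilon^{N-1-C_1}\ln(1/\varepsilon)$ and $\|R\|_{L^\infty(\bar\Omega)}\le c\,\varepsilon^{(N-1)/2-\kappa_0}$ for a fixed $\kappa_0$ depending only on $A$, $D$, $T$.

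To recover the exponent in (\ref{eq23}), given $n$ I would choose $N=n+4+\lceil C_1\rceil$, so that $\|u-u_{\varepsilon N}\|\le c\,\varepsilon^{(N-1)/2-\kappa_0}\le c\,\varepsilon^{(n+1)/2}$, while $\|u_{\varepsilon N}-u_{\varepsilon n}\|=\big\|\sum_{k=n+1}^{N}\varepsilon^{k/2}u_k\big\|\le c\,\varepsilon^{(n+1)/2}$ by the uniform boundedness of the $u_k$ established in the first step; adding these gives $\|R_{\varepsilon n}\|=\|u-u_{\varepsilon n}\|\le c\,\varepsilon^{(n+1)/2}$, which is (\ref{eq23}). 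I expect the genuine obstacle to be precisely this a priori estimate: since $A(x)$ and $D(t)$ are non-self-adjoint, neither a plain energy estimate nor a componentwise maximum principle closes, and one must construct the smooth symmetrizer that converts conditions 2), 3) into a usable sign condition while keeping under control both the commutator terms generated by diagonalizing $A(x)$ and the degeneracy of the coefficient $\varepsilon+t$ at $t=0$ — the latter being the source of the (harmless) loss $\varepsilon^{-\kappa_0}$ that forces one to carry the few extra terms $u_{n+1},\dots,u_N$.
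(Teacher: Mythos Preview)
Your proof follows the same scheme as the paper: form the remainder, show that after restriction it satisfies $L_\varepsilon R=\varepsilon^{(n+1)/2}g$ with zero initial and boundary data and bounded $g$, then pass to a scalar inequality in the Euclidean norm and close by the maximum principle for the degenerate parabolic operator. The paper's own argument is little more than this outline --- it adds the single extra term $u_{n+1}$, invokes Hartman for the passage to norms, and states the maximum-principle bound without further justification --- so your explicit symmetrizer construction and the device of carrying extra terms $u_{n+1},\dots,u_N$ to absorb the $\varepsilon^{-\kappa_0}$ loss from the degeneracy at $t=0$ are a legitimate, more detailed realization of the same strategy rather than a different route.
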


\providecommand{\href}[2]{#2}

\address{
Kyrgyz-Turkish Manas University\\
Chyngyz Aytmatov Campus, 720038, Djal, Bishkek, KYRGYZSTAN\\
\email{asan.omuraliev@manas.edu.kg}, \\
\email{peyil.esengul@manas.edu.kg}\\
}

\end{document}